\newtheorem{thm}{Theorem}[section]
\newtheorem{prop}[thm]{Proposition}
\newtheorem{lemma}[thm]{Lemma}
\newtheorem{cor}[thm]{Corollary}
\theoremstyle{definition}
\newtheorem{defn}[thm]{Definition}
\newtheorem{rmk}[thm]{Remark}
\newcommand{\C}{\mathbb{C}}
\newcommand{\Z}{\mathbb{Z}}
\newcommand{\N}{\mathbb{N}}
\DeclareMathOperator{\Tr}{Tr}
\DeclareMathOperator{\id}{id}
\DeclareMathOperator{\im}{im}
\DeclareMathOperator{\Ext}{Ext}
\DeclareMathOperator{\Hom}{Hom}
\DeclareMathOperator{\Gl}{GL}
\DeclareMathOperator{\Sym}{Sym}
\DeclareMathOperator{\Alt}{Alt}
\DeclareMathOperator{\Sp}{Sp}
\title{A Cup Product Obstruction to Frobenius Stability}
\author{Forrest Glebe}
\begin{document}
\maketitle

\begin{abstract}
A countable discrete group $\Gamma$ is said to be Frobenius stable if a function from the group that is ``almost multiplicative'' in the point Frobenius norm topology is ``close'' to a genuine unitary representation in the same topology. The purpose of this paper is to show that if~$\Gamma$ is finitely generated and a non-torsion element of $H^2(\Gamma;\Z)$ can be written as a cup product of two elements in $H^1(\Gamma;\Z)$ then $\Gamma$ is not Frobenius stable. In general, 2-cohomology does not obstruct Frobenius stability. Some examples are discussed, including Thompson's group~$F$ and Houghton's group $H_3$. The argument is sufficiently general to show that the same condition implies non-stability in unnormalized Schatten $p$-norms for $1<p\le\infty$.
\end{abstract}
\section{Introduction}

Let $1\le p\le\infty$ and let $||\cdot||_p$ denote the unnormalized Schatten $p$-norm on the space of $k\times k$ complex matrices, $||M||_p=(\Tr((M^*M)^{p/2}))^{1/p}$ for $p<\infty$ and operator norm for $p=\infty$. Then a countable discrete group~$\Gamma$ is \textit{stable in the unnormalized Schatten $p$-norm} if for all sequences of functions~$\varphi_n$ from $\Gamma$ to the complex $k_n\times k_n$ unitary group $U_{k_n}$ the condition
\begin{equation}
||\varphi_n(gh)-\varphi_n(g)\varphi_n(h)||_p\rightarrow0,\,\,\forall g,h\in\Gamma\end{equation}
implies there exists a sequence of group homomorphisms $\psi_n:\Gamma\rightarrow U_{k_n}$ so that
\begin{equation}   
||\psi_n(g)-\varphi_n(g)||_p\rightarrow0,\,\,\forall g\in\Gamma.\end{equation}
Of particular interest is the $p=2$ case, called {\em Frobenius stability}, and the $p=\infty$ case called {\em matricial stability}. We will call a sequence of functions~$(\varphi_n)$ that satisfies condition (1) an {\em asymptotic homomorphism}. If there exist homomorphisms~$(\psi_n)$ satisfying condition~(2) we say that $(\varphi_n)$ is {\em perturbable to homomorphisms}. Frobenius stability was introduced by de Chiffre, Glebsky, Lubotzky, and Thom in~\cite{nonaproximable}. Stability of a finitely presented group is equivalent to a notion of stability of the presentation of that group; this notion was shown to be independent of the presentation by Arzhantseva and Păunescu in~\cite{almostperm}.

The goal of this paper is to show the following. 

\begin{thm}\label{main}
Let $\Gamma$ be a finitely generated discrete group, and let $1<p\le\infty$. If there are $\alpha,\beta\in H^1(\Gamma;\Z)$ so that $\alpha\smile\beta\in H^2(\Gamma;\Z)$ is non-torsion then $\Gamma$ is not stable in the unnormalized Schatten $p$-norm.
\end{thm}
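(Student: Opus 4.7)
The plan is to build an explicit asymptotic homomorphism from the shift-and-clock unitaries and obstruct any perturbation via a trace-log argument that promotes the cup product cocycle to a real-valued coboundary. Let $S,C\in U_n$ be the shift and clock unitaries on $\C^n$ satisfying $SC=e^{-2\pi i/n}CS$, and set $\varphi_n(g)=S^{\alpha(g)}C^{\beta(g)}$. The commutation relation gives
\[\varphi_n(g)\varphi_n(h)=e^{2\pi i\alpha(h)\beta(g)/n}\varphi_n(gh),\]
so the multiplicative defect is a scalar of modulus $O(1/n)$ times a unitary; its Schatten $p$-norm is $O(n^{1/p-1})$, which vanishes precisely when $p>1$, making $(\varphi_n)$ an asymptotic homomorphism in the required range.

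Suppose toward a contradiction that there are genuine homomorphisms $\psi_n:\Gamma\to U_n$ with $\|\psi_n(g)-\varphi_n(g)\|_p\to 0$. Put $v_n(g):=\psi_n(g)^{-1}\varphi_n(g)$, which tends to $I$ in Schatten $p$-norm and hence, since $\|\cdot\|_{op}\le\|\cdot\|_p$, also in operator norm. A direct manipulation of $\psi_n(gh)=\psi_n(g)\psi_n(h)$ together with the shift-clock cocycle rearranges to the exact matrix identity
\[\psi_n(h)^{-1}v_n(g)\psi_n(h)\cdot v_n(h)\cdot v_n(gh)^{-1}=e^{2\pi i\sigma(g,h)/n}\,I,\qquad \sigma(g,h):=\alpha(h)\beta(g).\]
Once $n$ is large relative to $|\sigma(g,h)|$, I take the principal-branch logarithm of both sides and then trace: the left-hand side expands via Baker-Campbell-Hausdorff into a linear part plus iterated commutators, and taking trace kills the latter (and erases the conjugation). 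This yields
\[\tr\log v_n(g)+\tr\log v_n(h)-\tr\log v_n(gh)=2\pi i\,\sigma(g,h),\]
so $\mu_n(g):=\tfrac{1}{2\pi i}\tr\log v_n(g)\in\R$ satisfies $(\delta\mu_n)(g,h)=\sigma(g,h)$ pointwise for $n$ sufficiently large.

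The cocycle $\sigma$ is the standard representative of $\beta\smile\alpha=-\alpha\smile\beta\in H^2(\Gamma;\Z)$. The hypothesis that $\alpha\smile\beta$ is non-torsion makes $[\sigma]\ne 0$ in $H^2(\Gamma;\R)$, while $\delta\mu_n=\sigma$ would exhibit $\sigma$ as a real coboundary, giving the desired contradiction.

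The technically delicate step is globalizing this pointwise identity. For fixed $n$, the principal-branch formula for $\log(e^{2\pi i\sigma/n})$ equals $2\pi i\sigma/n$ only when $|\sigma|<n/2$; elsewhere it differs by a multiple of $2\pi i$, so globally one only gets $\delta\mu_n\equiv\sigma\pmod n$. This weaker identity still forces the reduction of $[\alpha\smile\beta]$ to vanish in $H^2(\Gamma;\Z/n)$ for every large $n$, placing $[\alpha\smile\beta]$ in $\bigcap_n nH^2(\Gamma;\Z)$. The finitely generated hypothesis on $\Gamma$, together with the fact that $\alpha\smile\beta$ lies in the finitely generated image of $H^1(\Gamma;\Z)\otimes H^1(\Gamma;\Z)\to H^2(\Gamma;\Z)$, should force this intersection to contain only torsion, completing the contradiction with non-torsion. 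Establishing this divisibility control, along with uniform Baker-Campbell-Hausdorff convergence in operator norm, constitutes the main technical work.
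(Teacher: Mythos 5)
Your construction and the asymptotic multiplicativity estimate are exactly the paper's: the same shift--clock unitaries, the same scalar commutator identity, and the same bound $\|\varphi_n(gh)-\varphi_n(g)\varphi_n(h)\|_p=O(n^{1/p-1})$. The exact identity for $v_n(g)=\psi_n(g)^{-1}\varphi_n(g)$ and the observation that $\mu_n=\tfrac{1}{2\pi i}\tr\log v_n$ trivializes $\sigma$ are also the right core computation. The genuine gap is the step you yourself flag: you need $\delta\mu_n=\sigma$ as an identity of cochains on all of $\Gamma^2$ for a \emph{single} $n$, but your derivation only gives it for each fixed pair $(g,h)$ once $n\ge N(g,h)$, because the convergence $v_n(g)\to I$ is pointwise in $g$, not uniform. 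Your proposed repair does not work: for fixed $n$ and a pair $(g,h)$ where $v_n(g),v_n(h),v_n(gh)$ are far from the identity, the failure of additivity of $\tr\log$ is not merely the branch discrepancy of the scalar $e^{2\pi i\sigma/n}$ (the BCH expansion need not even converge, and the principal logs may be undefined), so the claimed global congruence $\delta\mu_n\equiv\sigma\pmod n$ is unjustified. The only unconditional statement is the determinant identity, which is vacuous here since $e^{2\pi i\sigma(g,h)}=1$ for integer $\sigma$. Consequently the divisibility argument placing $[\alpha\smile\beta]$ in $\bigcap_n nH^2(\Gamma;\Z)$ has no foundation.

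The fix --- and the route the paper takes --- is to avoid globalizing altogether. By the universal coefficient theorem and finite generation of $\Gamma$, the kernel of $H^2(\Gamma;\Z)\to\Hom(H_2(\Gamma;\Z),\Z)$ is $\Ext(H_1(\Gamma;\Z),\Z)$, which is torsion; so the non-torsion class $[\sigma]$ pairs nontrivially with some fixed $2$-cycle $c=\sum_j x_j[a_j|b_j]$. Only the finitely many elements $\{a_j,b_j,a_jb_j\}$ occur in $c$, so for $n$ large your pointwise identity holds simultaneously on all of them; summing $x_j\,\delta\mu_n(a_j,b_j)=x_j\,\sigma(a_j,b_j)$ over $j$ and using $\partial_2c=0$ (which forces $\sum_jx_j\,\delta\mu_n(a_j,b_j)=0$ for \emph{any} function $\mu_n$) yields $\langle\sigma,c\rangle=0$, the desired contradiction. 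This is precisely the content of the paper's Lemma~\ref{general} together with Theorem~\ref{obstruction}: the trace--log pairing $\langle\rho_n,c\rangle$ equals $\langle\sigma,c\rangle k_n/n\ne0$ for the projective representation but must vanish for anything within $1/24$ of a genuine representation on that finite set. Note also that your assertion that non-torsion implies $[\sigma]\ne0$ in $H^2(\Gamma;\R)$ is itself the same UCT-plus-finite-generation argument, so nothing is saved by working over $\R$; you may as well pair with the homology class directly.
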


In general nonzero second Betti number does not obstruct Frobenius stability; in~\cite{lattice} Bader, Lubotzky, Sauer, and Weinberger show that $\Sp_{2n+2}(\Z)$ is Frobenius stable despite having a nonzero second Betti number.

In the $p=\infty$ case our result follows from the methods developed by Dadarlat in~\cite{obs}, though it does not follow from the main result as stated there. The techniques we use here are more similar to those developed by the author in~\cite{constructive} and~\cite{frob}. In~\cite{frob} the notion of a \textit{skinny} cohomology class is used as an obstruction to Frobenius stability of nilpotent groups. Here a cohomology class $[\sigma]$ is skinny with respect to a homomorphism $\alpha:\Gamma\rightarrow\Z$ if the restriction of $[\sigma]$ to the kernel of $\alpha$ is a coboundary. Then $\alpha\smile\beta$ is skinny with respect to both $\alpha$ and $\beta$, motivating our main result.

The basic idea of the proof is that since $\alpha,\beta\in H^1(\Gamma;\Z)\cong\Hom(\Gamma,\Z)$ we can view the pair $(\alpha,\beta)$ as a homomorphism from $\Gamma$ to $\Z^2$. We use a classic example of an asymptotic homomorphism of $\Z^2$ due to Voiculescu in~\cite{voiculescuM} and pull it back by $(\alpha,\beta)$ to get an asymptotic homomorphism of $\Gamma$. We show that this is a projective representation\footnote{Meaning a map from $\Gamma$ to unitaries whose failure to be multiplicative, $\rho(gh)\rho(h)^{-1}\rho(g)^{-1}$, is scalar-valued.} of $\Gamma$ (Lemma~\ref{twist}). Then we use a winding number type argument based on the nontriviality of $\alpha\smile\beta$ to show that this asymptotic representation is not perturbable (Lemma~\ref{general}).

In Section 4 we go over examples of groups that Theorem~\ref{main} applies to. We show that Thompson's group $F$ and Houghton's groups $H_n$ for $n\ge3$ among other examples are not stable in the unnormalized Schatten $p$-norm for $p>1$. In some cases, there is a more direct elementary argument that the group is not stable because the map $(\alpha,\beta)$ from the group to $\Z^2$ splits; this is the case for Thompson's group $F$ in particular. In many cases the splittings are not obvious, so the main result is still useful in identifying the groups as non-stable. 

An asymptotic representation that is not perturbable can be described as follows.
\begin{defn}\label{rhon}
Since $H^1(\Gamma;\Z)\cong\Hom(\Gamma,\Z)$ we may view $\alpha$ and $\beta$ as group homomorphisms from $\Gamma$ to $\Z$. Then define

$$\rho_n(g)=u_n^{\alpha(g)}v_n^{\beta(g)}$$
where $u_n$ and $v_n$ are the $n\times n$ Voiculescu unitaries
$$u_n=\begin{bmatrix}
0&0&\cdots&0&0&1\\
1&0&\cdots&0&0&0\\
0&1&\cdots&0&0&0\\
\vdots&\vdots&\ddots&\vdots&\vdots&\vdots\\
0&0&\cdots&1&0&0\\
0&0&\cdots&0&1&0
\end{bmatrix}\mbox{ and }
v_n=\begin{bmatrix}
\exp\left(\frac{2\pi i}{n}\right)&0&0&\cdots&0\\
0&\exp\left(\frac{4\pi i}{n}\right)&0&\cdots&0\\
0&0&\exp\left(\frac{6\pi i}{n}\right)&\cdots&0\\
\vdots&\vdots&\vdots&\ddots&\vdots\\
0&0&0&\cdots&1
\end{bmatrix}.
$$
 
\end{defn}

\section{Notation}

There are many ways to characterize group homology and cohomology; we will give a concrete description of 1-cohomology, 2-cohomology, and 2-homology here. We will only use homology and cohomology with coefficients in $\Z$ and the trivial action in this paper. For more about this construction see \cite[Chapter II.3]{coho}.

As stated in the introduction, $H^1(\Gamma;\Z)\cong\Hom(\Gamma,\Z)$ and we can take this to be the definition.  

\begin{defn}
We define a {\em 2-cocycle} to be a function $\sigma$ from $\Gamma^2$ to $\Z$ satisfying the following equation
$$\sigma(g,h)-\sigma(g,hk)+\sigma(gh,k)-\sigma(h,k)=0.$$
A {\em 2-coboundary} is a function that can be written in the form
$$\sigma(g,h)=\gamma(g)-\gamma(gh)+\gamma(h)$$
for some function $\gamma:\Gamma\rightarrow\Z$. Every 2-coboundary is a 2-cocycle and $H^2(\Gamma;\Z)$ is defined to be the group of 2-cocycles, mod the subgroup of 2-coboundaries. The group operation is pointwise addition.
\end{defn}

\begin{defn}
Define $C_k(\Gamma)$ to be formal linear combinations of elements of~$\Gamma^k$. We write a typical element of $C_2(\Gamma)$ as
$$\sum_{j=1}^Nx_j[a_j|b_j]$$
with $a_j,b_j\in\Gamma$ and $x_j\in\Z$. Define $\partial_2:C_2(\Gamma)\rightarrow C_1(\Gamma)$ to by the equation
$$\partial_2[a|b]=[a]-[ab]+[b]$$
and $\partial_3:C_3(\Gamma)\rightarrow C_2(\Gamma)$ by
$$\partial_3[a|b|c]=[a|b]-[a|bc]+[ab|c]-[b|c].$$
Then $H_2(\Gamma;\Z):=\ker(\partial_2)/\im(\partial_3)$. An element of $\ker(\partial_2)$ is referred to as a {\em 2-cycle} and an element in $\im(\partial_3)$ is referred to as a {\em 2-boundary}.
\end{defn}

\begin{defn}
The {\em Kronecker pairing} between 2-homology and 2-cohomology is a bilinear map from $H^2(\Gamma;\Z)\times H_2(\Gamma;\Z)$ to $\Z$ defined by the formula
$$\left\langle\sigma,\sum_{j=1}^Nx_n[a_j|b_j]\right\rangle:=\sum_{j=1}^Nx_j\sigma(a_j,b_j)$$
where $\sigma$ is a cocycle, and $\sum_{j=1}^Nx_n[a_j|b_j]$ is a cycle. The value does not depend on either choice of representative.
\begin{defn}
The {\em cup product} is a bilinear map $\cdot\smile\cdot$ from $H^j(\Gamma;\Z)\times H^{k}(\Gamma;\Z)$ to $H^{j+k}(\Gamma;\Z)$. We will write the definition for the case that $j=k=1$. If $\alpha,\beta\in H^1(\Gamma;\Z)\cong\Hom(\Gamma,\Z)$ we define $\alpha\smile\beta$ to the cohomology class of the cocycle
$$\sigma(g,h)=\alpha(g)\beta(h).$$
For a more general definition, and more information see~\cite[Chapter V.3]{coho}.
\end{defn}

\begin{prop}
Let $q:\Gamma\rightarrow\Lambda$ be a group homomorphism. Then there is a map $q^*:H^*(\Lambda;\Z)\rightarrow H^*(\Gamma;\Z)$. Moreover $q^*(\alpha\smile\beta)=q^*(\alpha)\smile q^*(\beta)$.
\end{prop}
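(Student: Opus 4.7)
The plan is to define $q^*$ on the cochain level by straightforward precomposition with $q$, verify it descends to cohomology, and then match the explicit cocycle formulas for the cup product.

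First I would define the pullback at the cochain level in each degree relevant to the paper. In degree~$1$, an element of $H^1(\Lambda;\Z)=\Hom(\Lambda,\Z)$ is already a function $\alpha:\Lambda\to\Z$, and I set $q^*(\alpha):=\alpha\circ q$, which is manifestly a homomorphism $\Gamma\to\Z$. In degree~$2$, for a $2$-cocycle $\sigma:\Lambda^2\to\Z$ I set
$$q^*(\sigma)(g,h):=\sigma(q(g),q(h)).$$
Checking the cocycle identity for $q^*(\sigma)$ is a direct substitution into the identity satisfied by $\sigma$, since the cocycle equation only involves products and the arguments transform under $q$ as $q(g)q(h)=q(gh)$. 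Similarly, if $\sigma=\gamma(g)-\gamma(gh)+\gamma(h)$ is a coboundary, then $q^*(\sigma)$ is the coboundary of $\gamma\circ q$. Hence $q^*$ takes cocycles to cocycles and coboundaries to coboundaries, so it descends to a well-defined map $H^2(\Lambda;\Z)\to H^2(\Gamma;\Z)$. (The same prescription works in every degree, giving the graded map $q^*:H^*(\Lambda;\Z)\to H^*(\Gamma;\Z)$.)

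For the cup product assertion, I would work directly with the explicit representative given in the definition above. If $\alpha,\beta\in H^1(\Lambda;\Z)$ then $\alpha\smile\beta$ is represented by the cocycle $\sigma(g,h)=\alpha(g)\beta(h)$. Applying the pullback formula,
$$q^*(\sigma)(g,h)=\sigma(q(g),q(h))=\alpha(q(g))\,\beta(q(h))=q^*(\alpha)(g)\cdot q^*(\beta)(h),$$
which is exactly the cocycle representative of $q^*(\alpha)\smile q^*(\beta)$. Since both sides agree on the nose as cocycles, they agree in $H^2(\Gamma;\Z)$.

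There is no real obstacle here: the statement is essentially a bookkeeping check once the pullback is defined by precomposition, and the multiplicativity with respect to the cup product is forced by the product form of the cup product cocycle. The only thing worth flagging is well-definedness of $q^*$ on cohomology, which is handled by the coboundary calculation above. If one wanted the general compatibility $q^*(\xi\smile\eta)=q^*(\xi)\smile q^*(\eta)$ in all degrees, the same argument applies once one fixes the standard product cocycle formula from \cite[Chapter V.3]{coho}; for this paper only the $H^1\times H^1\to H^2$ case is needed.
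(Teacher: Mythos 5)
Your proof is correct. The paper states this proposition without proof, treating it as a standard fact about the functoriality of group cohomology (with a general pointer to \cite{coho}); your cochain-level verification --- pullback by precomposition, the check that cocycles and coboundaries are preserved, and the on-the-nose identification of the cup product representatives $\alpha(q(g))\beta(q(h))=(q^*\alpha)(g)\,(q^*\beta)(h)$ --- is exactly the standard argument one would supply, and it matches the explicit conventions the paper sets up in its Notation section.
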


\begin{defn}[\cite{constructive} Definition 3.3]\label{pairing}
If
$$c=\sum_{j=1}^N x_j[a_j|b_j]\in C_2(\Gamma)$$
and $\rho:\Gamma\rightarrow\Gl_n(\C)$ so that for all $j\in\{1,\ldots,N\}$
$$||\rho(a_jb_j)\rho(a_j)^{-1}\rho(b_j)^{-1}-\id_{\C^n}||_\infty<1$$
we define
$$\langle \rho,c\rangle=\frac{1}{2\pi i}\sum_{j=1}^Nx_j\Tr(\log(\rho(a_jb_j)\rho(b_j)^{-1}\rho(a_j)^{-1}))$$
where $\log$ is defined as a power series centered at 1.
\end{defn}

If $\partial c=0$ we have that $\langle\rho,c\rangle\in\Z$  \cite[Proposition 3.4]{constructive}. The version of the ``winding number argument'' we are using is as follows.

\begin{thm}\label{obstruction}\cite[Theorem 3.7]{constructive}
If $\rho_0$ is a (not necessarily unitary) representation of $\Gamma$, $\rho_1$ is a function from~$\Gamma$ to~$U(n)$, 
$$c=\sum_{j=1}^N x_j[a_j|b_j]$$
is a 2-cycle on $\Gamma$, and $$||\rho_1(g)-\rho_0(g)||_\infty<\frac{1}{24}$$
for all $g\in\{a_j,b_j,a_jb_j\}_{j=1}^N$ then $\rho_1$ is multiplicative enough for $\langle\rho_1,c\rangle$ to be defined and $\langle\rho_1,c\rangle=0$.
\end{thm}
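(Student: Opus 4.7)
The plan is to prove both claims of Theorem~\ref{obstruction} by comparing $\rho_1$ to the genuine representation $\rho_0$ and exploiting that the pairing is $\Z$-valued on cycles. Throughout I would abbreviate $E_g:=\rho_1(g)-\rho_0(g)$, so that $\|E_g\|_\infty<1/24$ at each relevant $g$. The key preliminary bound is that $\rho_0(g)$ is invertible with $\|\rho_0(g)^{-1}\|_\infty\le 24/23$: this follows from the Neumann series applied to $\rho_0(g)=\rho_1(g)(I+\rho_1(g)^{-1}E_g)$, since $\rho_1(g)$ is unitary and $\|\rho_1(g)^{-1}E_g\|_\infty<1/24$. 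The identity $\rho_1(g)^{-1}-\rho_0(g)^{-1}=\rho_1(g)^{-1}E_g\rho_0(g)^{-1}$ then yields $\|\rho_1(g)^{-1}-\rho_0(g)^{-1}\|_\infty\le 1/23$.

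For the first claim (multiplicative enough), I would write out a three-term telescoping sum. Since $\rho_0$ is a representation, $\rho_0(a_jb_j)\rho_0(a_j)^{-1}\rho_0(b_j)^{-1}=I$, so
\begin{align*}
\rho_1(a_jb_j)\rho_1(a_j)^{-1}\rho_1(b_j)^{-1}-I
&=E_{a_jb_j}\rho_1(a_j)^{-1}\rho_1(b_j)^{-1}\\
&\quad+\rho_0(a_jb_j)\bigl(\rho_1(a_j)^{-1}-\rho_0(a_j)^{-1}\bigr)\rho_1(b_j)^{-1}\\
&\quad+\rho_0(a_jb_j)\rho_0(a_j)^{-1}\bigl(\rho_1(b_j)^{-1}-\rho_0(b_j)^{-1}\bigr).
\end{align*}
Substituting the preliminary bounds, each summand has operator norm at most a small constant multiple of $1/24$, and the total is easily seen to be strictly less than $1$; the same bound applies to the reordered product $\rho_1(a_jb_j)\rho_1(b_j)^{-1}\rho_1(a_j)^{-1}$ that appears inside the $\log$ in Definition~\ref{pairing}.

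For the second claim I would use a linear homotopy $\rho_t(g)=(1-t)\rho_0(g)+t\rho_1(g)$, for which $\rho_t(g)-\rho_0(g)=tE_g$ has norm $<1/24$ uniformly in $t\in[0,1]$. The Neumann series and telescoping estimates from the first stage go through unchanged, so $\rho_t$ is multiplicative enough for every $t$, and $\langle\rho_t,c\rangle$ is well-defined. Because $c$ is a 2-cycle, the cited integrality statement forces $\langle\rho_t,c\rangle\in\Z$, while $\rho_t$ and the principal-branch $\log$ of a matrix close to $I$ depend continuously on $t$, so $\langle\rho_t,c\rangle$ is a continuous $\Z$-valued function on $[0,1]$ and hence constant. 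At $t=0$ each matrix inside the log is the identity, giving $\langle\rho_0,c\rangle=0$, and therefore $\langle\rho_1,c\rangle=0$. The main obstacle is the careful constant-bookkeeping: one must verify that the choice $1/24$ is small enough to keep every telescoping sum strictly below $1$ \emph{uniformly along the homotopy}, not merely at its endpoint, so that $\log$ is given by its convergent power series throughout.
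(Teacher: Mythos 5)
This theorem is imported by citation (\cite[Theorem 3.7]{constructive}); the paper gives no proof of its own, so there is nothing internal to compare against. Your proposal is the standard ``winding number'' argument that the surrounding text attributes to Kazhdan and Exel--Loring, and it is essentially correct: the Neumann-series bounds, the uniform estimate along the linear homotopy $\rho_t=(1-t)\rho_0+t\rho_1$, the integrality of $\langle\rho_t,c\rangle$ on cycles (which you may legitimately quote, since the paper itself quotes \cite[Proposition 3.4]{constructive}), and the continuity-plus-discreteness conclusion all work, and the arithmetic with $1/24$ does keep every term well inside the radius of convergence of $\log$ uniformly in $t$.

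One step is wrong as written, though easily repaired. You telescope $\rho_1(a_jb_j)\rho_1(a_j)^{-1}\rho_1(b_j)^{-1}-I$ using the claim that $\rho_0(a_jb_j)\rho_0(a_j)^{-1}\rho_0(b_j)^{-1}=I$. For a homomorphism $\rho_0$ that product equals $\rho_0(a_j)\rho_0(b_j)\rho_0(a_j)^{-1}\rho_0(b_j)^{-1}$, a commutator which need not be anywhere near $I$ when the image is nonabelian; only the ordering $\rho_0(a_jb_j)\rho_0(b_j)^{-1}\rho_0(a_j)^{-1}$ collapses to the identity. So the telescoping must be carried out directly for the product $\rho(a_jb_j)\rho(b_j)^{-1}\rho(a_j)^{-1}$ that actually appears inside the $\log$ in Definition~\ref{pairing} --- your remark that ``the same bound applies to the reordered product'' has the logic backwards: the bound holds for the reordered product \emph{because} that is the ordering for which $\rho_0$ gives exactly $I$, not because it transfers from the ordering you displayed. (The hypothesis of Definition~\ref{pairing} and its formula already disagree on this ordering, so some care with it is warranted in any case.) With the decomposition rewritten for the correct ordering, all your norm estimates go through verbatim and the proof is complete.
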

This argument has its roots in the ``winding number argument'' discovered by Kazhdan~\cite{KvoiculescuM}, and later independently by Exel and Loring~\cite{ELVoiculescuM}.

\end{defn}

\section{Proofs}

\begin{lemma}\label{twist}
Define $\rho_n$ as in Definition~\ref{rhon}. It obeys the following identity
$$\rho_n(gh)\rho_n(h)^{-1}\rho_n(g)^{-1}=\exp\left(-\frac{2\pi i}{n}\beta(g)\alpha(h)\right)\id_{\C^n}.$$
\begin{proof}
Let $\omega=\exp\left(\frac{2\pi i}{n}\right)$ and note the four identities $v_nu_n=\omega u_nv_n$, $v_n^{-1}u_n=\omega^{-1}u_nv_n^{-1}$, $v_nu_n^{-1}=\omega^{-1}u_n^{-1}v_n$, and $v_n^{-1}u_n^{-1}=\omega u_n^{-1}v_n^{-1}$. From these four identities, it follows that $v_n^xu_n^y=\omega^{xy}u_n^xv_n^y$, for all $x,y\in\Z$. We compute
\begin{align*}
\rho_n(gh)\rho_n(h)^{-1}\rho_n(g)^{-1}&=u_n^{\alpha(g)+\alpha(h)}v_n^{\beta(g)+\beta(h)}v_n^{-\beta(h)}u_n^{-\alpha(h)}v_n^{-\beta(g)}u_n^{-\alpha(g)}\\
&=u_n^{\alpha(g)+\alpha(h)}v_n^{\beta(g)}u_n^{-\alpha(h)}v_n^{-\beta(g)}u_n^{-\alpha(g)}\\
&=\omega^{-\beta(g)\alpha(h)}u_n^{\alpha(g)+\alpha(h)}u_n^{-\alpha(h)}v_n^{\beta(g)}v_n^{-\beta(g)}u_n^{-\alpha(g)}\\
&=\omega^{-\beta(g)\alpha(h)}\id_{\C^n}.
\end{align*}
\end{proof}
\end{lemma}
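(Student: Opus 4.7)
The plan is a direct computation, exploiting the single nontrivial commutation relation between the two Voiculescu unitaries and the fact that $\alpha,\beta$ are additive. The key input is that $v_n u_n = \omega u_n v_n$ where $\omega = \exp(2\pi i/n)$; this follows from a short matrix calculation using the explicit forms in Definition~\ref{rhon} (multiplying the diagonal $v_n$ by the cyclic shift $u_n$ picks up a single factor of $\omega$ relative to the reverse order).

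From this base commutation relation I would verify the four sign variants $v_n^{\pm 1} u_n^{\pm 1} = \omega^{\pm 1} u_n^{\pm 1} v_n^{\pm 1}$, then conclude by induction (in $x$ and $y$ separately) the general identity $v_n^{x} u_n^{y} = \omega^{xy} u_n^{y} v_n^{x}$ for all $x,y \in \Z$. This is the computational heart and the only nonformal step.

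With this in hand, the lemma becomes a rewriting exercise. Using that $\alpha$ and $\beta$ are homomorphisms, $\rho_n(gh) = u_n^{\alpha(g)+\alpha(h)} v_n^{\beta(g)+\beta(h)}$. Then
\begin{align*}
\rho_n(gh)\rho_n(h)^{-1}\rho_n(g)^{-1}
&= u_n^{\alpha(g)+\alpha(h)} v_n^{\beta(g)+\beta(h)} v_n^{-\beta(h)} u_n^{-\alpha(h)} v_n^{-\beta(g)} u_n^{-\alpha(g)}.
\end{align*}
The two middle $v_n$ powers cancel, leaving a $v_n^{\beta(g)}$ sandwiched between $u_n^{-\alpha(h)}$ on its right and $u_n^{\alpha(g)+\alpha(h)}$ on its left. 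Applying the commutation identity to slide $v_n^{\beta(g)}$ past $u_n^{-\alpha(h)}$ produces exactly one scalar factor $\omega^{-\beta(g)\alpha(h)}$, after which all remaining $u_n$ and $v_n$ powers add to zero and collapse to $\id_{\C^n}$. Since $\omega^{-\beta(g)\alpha(h)} = \exp(-\tfrac{2\pi i}{n}\beta(g)\alpha(h))$, the stated identity follows.

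The only potential obstacle is sign and ordering bookkeeping in the commutation step; once one fixes the convention that $v_n^x u_n^y = \omega^{xy} u_n^y v_n^x$ (rather than the reverse), the rest is mechanical.
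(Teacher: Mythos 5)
Your proposal is correct and follows essentially the same route as the paper: establish $v_nu_n=\omega u_nv_n$ and its sign variants, deduce the general relation $v_n^xu_n^y=\omega^{xy}u_n^yv_n^x$, and then perform the same cancellation in $\rho_n(gh)\rho_n(h)^{-1}\rho_n(g)^{-1}$. (Your statement of the general commutation identity, with $u_n^yv_n^x$ on the right, is in fact the form the paper actually uses in its computation.)
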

\begin{lemma}\label{general}
Let $\Gamma$ be a countable discrete group and let $[\sigma]\in H^2(\Gamma;\Z)$ be a cohomology class with cocycle representative $\sigma$. Suppose that $\rho_n$ is a sequence of functions from $\Gamma$ to $k_n\times k_n$ unitaries so that
$$\rho_n(gh)\rho_n(h)^{-1}\rho_n(g)^{-1}=\exp\left(\frac{2\pi i}{n}\sigma(g,h)\right)\id_{\C^n}.$$
Then
$$||\rho_n(g)\rho_n(h)-\rho_n(gh)||_\infty\le2\pi|\sigma(g,h)|/n;$$
in particular, $\rho_n$ is asymptotically multiplicative in operator norm. 

Now suppose that $[\sigma]$ pairs nontrivially with some homology class. Then~$\rho_n$ is not perturbable to homomorphisms in operator norm.
\end{lemma}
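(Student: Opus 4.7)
The plan splits the proof into two parts mirroring the statement. The first bound is essentially a one-line calculation: the hypothesis places $\rho_n(gh)\rho_n(h)^{-1}\rho_n(g)^{-1}$ equal to a unit scalar $\omega = \exp(2\pi i \sigma(g,h)/n)$ times the identity, so $\rho_n(gh) - \rho_n(g)\rho_n(h) = (\omega - 1)\rho_n(g)\rho_n(h)$. Because $\rho_n(g)\rho_n(h)$ is unitary, the operator norm of this difference equals $|\omega - 1|$, which is at most $2\pi |\sigma(g,h)|/n$ by the elementary estimate $|e^{it} - 1| \le |t|$. Since each $\sigma(g,h)$ is a fixed integer, asymptotic multiplicativity in operator norm is immediate.

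For the non-perturbability claim I would argue by contradiction. Fix a 2-cycle $c = \sum_{j=1}^N x_j[a_j|b_j]$ that pairs nontrivially with $[\sigma]$, and suppose homomorphisms $\psi_n: \Gamma \to U_{k_n}$ satisfy $\|\psi_n(g) - \rho_n(g)\|_\infty \to 0$ for every $g\in\Gamma$. Only the finitely many elements $a_j, b_j, a_jb_j$ actually appear in $c$, so for $n$ large enough the perturbation falls below $1/24$ uniformly over that finite set, and Theorem \ref{obstruction} (with $\rho_0 = \psi_n$ and $\rho_1 = \rho_n$) forces $\langle \rho_n, c \rangle = 0$. On the other hand, the hypothesis identifies each $\rho_n(a_j b_j)\rho_n(b_j)^{-1}\rho_n(a_j)^{-1}$ with the scalar $\exp(2\pi i \sigma(a_j,b_j)/n)\cdot \id$, whose principal logarithm for sufficiently large $n$ is $(2\pi i \sigma(a_j,b_j)/n)\id$ and whose trace (with the identity on $\C^n$ as written in the hypothesis) equals $2\pi i \sigma(a_j, b_j)$. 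Substituting into Definition \ref{pairing} gives
$$\langle \rho_n, c \rangle = \sum_{j=1}^N x_j \sigma(a_j, b_j) = \langle [\sigma], [c] \rangle,$$
a fixed nonzero integer, contradicting the vanishing that would follow from Theorem \ref{obstruction}.

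The main point is conceptual rather than technical: one must recognize Definition \ref{pairing} as the right invariant to compute, since for near-representations whose scalar defect is built from a cocycle $\sigma$, its value reproduces the honest Kronecker pairing of $\sigma$ with the chosen 2-cycle on the nose. Once that alignment is spotted, Theorem \ref{obstruction} supplies exactly the rigidity needed to produce the contradiction, and the remaining steps---the unitary norm estimate, the principal-log identity on scalar matrices, and tracking the dimension factor that cancels---are routine bookkeeping.
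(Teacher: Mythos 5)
Your proposal is correct and follows essentially the same route as the paper: the same unitary-invariance estimate $|e^{it}-1|\le|t|$ for the first claim, and the same computation of $\langle\rho_n,c\rangle$ via Definition~\ref{pairing} combined with Theorem~\ref{obstruction} for the second. The only cosmetic differences are that you phrase the second part as a contradiction rather than a direct statement, and you take the scalar defect to live on $\C^n$ (so the pairing equals $\langle\sigma,c\rangle$ exactly) whereas the paper tracks the dimension as $k_n$ and obtains the equally nonzero value $\langle\sigma,c\rangle\,k_n/n$.
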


\begin{proof}
This follows the proof of \cite[Theorem~3.20]{constructive}.

The first part follows from the fact that
$$||\rho_n(gh)-\rho_n(g)\rho_n(h)||_\infty=||\rho_n(gh)\rho_n(h)^{-1}\rho_n(g)^{-1}-\id_{\C^{k_n}}||_\infty.$$

Now we will show that for large enough $n$, $\rho_n$ is not close to any genuine representation of $\Gamma$ in operator norm on a particular finite subset of $\Gamma$. There is some 2-cycle $c\in C_2(\Gamma)$ written
$$c=\sum_{i=1}^Nx_i[a_i|b_i].$$
so that
$$\langle \sigma,c\rangle\ne0.$$
Then we compute that
\begin{align*}
\langle\rho_n,c\rangle&=\frac{1}{2\pi i}\sum_{j=1}^N x_j\Tr(\log(\rho_n(a_jb_j)\rho_n(b_j)^{-1}\rho_n(a_j)^{-1}))\\
&=\frac{1}{2\pi i}\sum_{j=1}^N x_j\Tr\left(\log\left(\exp\left(\frac{2\pi i}{n}\sigma(a_j,b_j)\right)\id_{\C^{k_n}}\right)\right)\\
&=\frac{1}{2\pi i}\sum_{j=1}^N x_j\frac{2\pi i}{n}\sigma(a_j,b_j)\Tr(\id_{\C^{k_n}})\\
&=\langle\sigma,c\rangle\frac{k_n}{n}\\
&\ne0.
\end{align*}
By Theorem~\ref{obstruction} it follows that $\rho_n$ cannot be within $\frac{1}{24}$ of a genuine representation on the set $\{a_j,b_j,a_jb_j\}_{j=1}^N$ and thus cannot be perturbed to a genuine representation.

\end{proof}

Now we are ready to prove Theorem~\ref{main}.
\begin{proof}
Note that by the definition of the cup product $\sigma(g,h)=-\beta(g)\alpha(h)$ is a cocycle representative of the cohomology class $-\beta\smile\alpha=\alpha\smile\beta$~\cite[V.3 equation 3.6]{coho}. 

First, we show asymptotic multiplicativity. In operator norm, this follows directly from Lemma~\ref{twist} and Lemma~\ref{general}. For the $p<\infty$ case, we use the same lemmas to show
\begin{align*}
||\rho_n(gh)-\rho_n(g)\rho_n(h)||_p&\le||\rho_n(gh)-\rho_n(g)\rho_n(h)||_\infty\cdot n^{1/p}\\
&\le2\pi|\alpha(g)\beta(h)|\cdot n^{1/p-1}.
\end{align*}
This goes to zero for $p>1$.

Since $\alpha\smile\beta$ is non-torsion it must also pair nontrivially with a 2-homology class; to see this note that from the universal coefficient theorem \cite[Theorem~53.1]{eat}  we have a short exact sequence
$$\begin{tikzcd}
0\arrow{r}{}&\Ext(H_1(\Gamma;\Z),\Z)\arrow{r}{}& H^2(\Gamma;\Z)\arrow{r}{}&\Hom(H_2(\Gamma;\Z),\Z)\arrow{r}{}&0
\end{tikzcd}.$$
Since $\Gamma$ is finitely generated $H_1(\Gamma;\Z)\cong\Gamma/[\Gamma,\Gamma]$ \cite[page 36]{coho} is finitely generated as well. Thus $\Ext(H_1(\Gamma;\Z),\Z))$ can be show to be torsion from \cite[Theorem 52.3]{eat} and the table on \cite{eat} page~331. 

Now it follows from Lemma~\ref{twist} and Lemma~\ref{general} that $\rho_n$ is not perturbable in operator norm to a sequence of genuine representations. Since the operator norm is smaller than any other unnormalized Schatten $p$-norm it follows that $\rho_n$ cannot be close to a genuine representation in any of these norms either.
\end{proof}
\begin{rmk}
Note that the finitely generated condition in Theorem~\ref{main} can be dropped if we require that $\alpha\smile\beta$ pairs nontrivially with a 2-homology class.
\end{rmk}

\section{Examples}

Note that our result relies on the existence of a homomorphism $(\alpha,\beta)$ from~$\Gamma$ to $\Z^2$. Because $\alpha\smile\beta=-\beta\smile\alpha$~\cite[V.3 equation 3.6]{coho} our assumption that $\alpha\smile\beta$ is non-torsion implies that $\alpha$ and $\beta$ must be linearly independent. If the map $(\alpha,\beta)$ is a surjection that splits, our theorem applies, but in this case, the non-stability can be proven with a simpler argument. Since the argument is more general we will explain a more general context.

\begin{defn}[\cite{nonaproximable}]
If $\{(G_k,d_k)\}$ is a family of groups with bi-invariant metrics, $(G_k,d_k)${\em-stability} is defined analogously go stability in the unnormalized Schatten $p$-norm but $U_{k_n}$ is replaced by the family $G_{k_n}$ and the convergence in both conditions is replaced by convergence in the metric $d_{k_n}$. {\em Uniform $(G_k,d_k)$-stability} is defined analogously, but with uniform convergence instead of pointwise convergence.
\end{defn}

If the map $(\alpha,\beta):\Gamma\rightarrow\Z^2$ splits then the non-stability of $\Gamma$ can be proved with the following lemma pointed out to the author by Francesco Fournier-Facio.

\begin{lemma}\label{split}
Suppose that $\Lambda\rtimes\Upsilon$ is (uniformly) $(G_k,d_k)$-stable. Then $\Upsilon$ is (uniformly) $(G_k,d_k)$-stable as well.
\end{lemma}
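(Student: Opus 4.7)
The plan is to exploit the fact that $\Upsilon$ is a retract of $\Lambda\rtimes\Upsilon$: the quotient homomorphism $p:\Lambda\rtimes\Upsilon\to\Upsilon$ (killing the normal subgroup $\Lambda$) splits the inclusion $i:\Upsilon\hookrightarrow\Lambda\rtimes\Upsilon$, so $p\circ i=\id_\Upsilon$. The strategy is to lift any asymptotic homomorphism of $\Upsilon$ to one on $\Lambda\rtimes\Upsilon$ via $p$, apply the hypothesized stability there, and then restrict back along $i$.

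Concretely, given an asymptotic homomorphism $\varphi_n:\Upsilon\to G_{k_n}$, I would first define $\tilde\varphi_n:=\varphi_n\circ p:\Lambda\rtimes\Upsilon\to G_{k_n}$. Since $p$ is a genuine group homomorphism and $d_{k_n}$ is bi-invariant,
$$d_{k_n}\bigl(\tilde\varphi_n(xy),\tilde\varphi_n(x)\tilde\varphi_n(y)\bigr)=d_{k_n}\bigl(\varphi_n(p(x)p(y)),\varphi_n(p(x))\varphi_n(p(y))\bigr),$$
so asymptotic multiplicativity of $\varphi_n$ (pointwise in $\Upsilon$, respectively uniform) transfers directly to $\tilde\varphi_n$ (pointwise in $\Lambda\rtimes\Upsilon$, respectively uniform, using that $p$ is surjective). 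By the assumed $(G_k,d_k)$-stability of $\Lambda\rtimes\Upsilon$, there exist genuine homomorphisms $\tilde\psi_n:\Lambda\rtimes\Upsilon\to G_{k_n}$ with $d_{k_n}(\tilde\psi_n(x),\tilde\varphi_n(x))\to 0$ in the appropriate sense.

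Finally, I would set $\psi_n:=\tilde\psi_n\circ i:\Upsilon\to G_{k_n}$, which is a composition of homomorphisms and hence itself a homomorphism. For any $u\in\Upsilon$, using $p\circ i=\id_\Upsilon$,
$$d_{k_n}(\psi_n(u),\varphi_n(u))=d_{k_n}\bigl(\tilde\psi_n(i(u)),\varphi_n(p(i(u)))\bigr)=d_{k_n}\bigl(\tilde\psi_n(i(u)),\tilde\varphi_n(i(u))\bigr),$$
which tends to zero pointwise (respectively uniformly, taking supremum over $i(\Upsilon)\subseteq\Lambda\rtimes\Upsilon$). This yields the desired approximating sequence of homomorphisms of $\Upsilon$.

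The argument is essentially formal; the only thing to verify at each step is that neither the lift nor the restriction damages the approximation bounds, and this follows because both $p$ and $i$ are genuine group homomorphisms, so they preserve products exactly. No hypothesis on $\Lambda$ is needed. The same argument in fact shows the stronger statement that any retract of a (uniformly) $(G_k,d_k)$-stable group is (uniformly) $(G_k,d_k)$-stable.
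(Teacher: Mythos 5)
Your proof is correct and is essentially identical to the paper's, which composes the given asymptotic homomorphism with the retraction $r:\Lambda\rtimes\Upsilon\to\Upsilon$ and then restricts the resulting genuine representations along the section $s:\Upsilon\to\Lambda\rtimes\Upsilon$ in exactly the same way. Your closing observation that the argument applies to any retract is accurate and is implicit in the paper's argument as well.
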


\begin{proof}
Call $\Gamma=\Lambda\rtimes\Upsilon$ and let $r:\Gamma\rightarrow\Upsilon$ and $s:\Upsilon\rightarrow\Gamma$ be the obvious maps. Let $\rho_n$ be a sequence of (uniformly) asymptotically multiplicative maps from~$\Upsilon$ to~$G_{k_n}$. Then $\rho_n\circ r$ are (uniformly) asymptotically multiplicative as well. Thus there is a sequence of representations $\pi_n:\Gamma\rightarrow G_{k_n}$ be a sequence of representations that (uniformly) approximate $\rho_n\circ r$. Then $\pi_n\circ s$ (uniformly) approximates $\rho_n\circ r\circ s=\rho_n$.
\end{proof} 

Many examples of groups that satisfy the conditions of Theorem~\ref{main} have a split homomorphism to $\Z^2$, but not all; the easiest counterexample is higher genus surface groups. These satisfy the conditions of the theorem, but they are hyperbolic so $\Z^2$ cannot be embedded as a subgroup in them by~\cite[Corollary III.$\Gamma$~3.10]{metriccurv}. They are already known not to be stable in unnormalized Schatten $p$-norm for $1<p\le\infty$; it follows from the computation in the proof of Theorem~2 in~\cite{KvoiculescuM}. In our examples, we will prove that groups are not stable using Theorem~\ref{main}, then use Lemma~\ref{split} to provide an alternate proof if possible. Often the splittings are not obvious, so Theorem~\ref{main} is still useful in identifying these groups as non-stable; often the proof that the group satisfies the cohomological condition is shorter than the proof that the map splits.

Many interesting examples come from extensions of a group that fits the conditions of Theorem~\ref{main} by a locally finite group, a class of groups that was suggested to the author by Francesco Fournier-Facio.

\begin{lemma}
Suppose $\Lambda\trianglelefteq\Gamma$ and let $q:\Gamma\rightarrow\Gamma/\Lambda$ be the quotient map. Suppose that $H^1(\Lambda;\Z)=\{0\}$. Let $\alpha,\beta\in H^2(\Gamma/\Lambda;\Z)$. If $\alpha\smile\beta$ is non-torsion it follows that $q^*(\alpha)\smile q^*(\beta)$ is non-torsion as well.
\end{lemma}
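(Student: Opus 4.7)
The plan is to exploit the Lyndon--Hochschild--Serre spectral sequence of the extension $1\to\Lambda\to\Gamma\to\Gamma/\Lambda\to1$, namely $E_2^{p,q}=H^p(\Gamma/\Lambda;H^q(\Lambda;\Z))\Rightarrow H^{p+q}(\Gamma;\Z)$, which is multiplicative with respect to the cup product. The hypothesis $H^1(\Lambda;\Z)=0$ kills the entire row $q=1$ on the $E_2$ page. Under this spectral sequence, the inflation $q^*:H^n(\Gamma/\Lambda;\Z)\to H^n(\Gamma;\Z)$ is the edge homomorphism and factors as the quotient $E_2^{n,0}=H^n(\Gamma/\Lambda;\Z)\twoheadrightarrow E_\infty^{n,0}$ followed by the inclusion $E_\infty^{n,0}\hookrightarrow H^n(\Gamma;\Z)$ as the bottom filtration piece. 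Since $q^*$ is a ring map, $q^*(\alpha\smile\beta)=q^*\alpha\smile q^*\beta$, and it is enough to show that $\alpha\smile\beta\in E_2^{4,0}$ has non-torsion image in $E_\infty^{4,0}$.

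First I would dispose of the degree-$2$ part. The only incoming differential to $E_r^{2,0}$ is $d_2:E_2^{0,1}\to E_2^{2,0}$, and it vanishes because $E_2^{0,1}=H^1(\Lambda;\Z)^{\Gamma/\Lambda}$ is a subgroup of $H^1(\Lambda;\Z)=0$. Hence $q^*$ is injective on $H^2$ and the classes $q^*\alpha,q^*\beta$ are represented on $E_\infty$ by the images of $\alpha,\beta\in E_2^{2,0}$. By multiplicativity of the spectral sequence their product in $H^4(\Gamma;\Z)$ is exactly the image of $\alpha\smile\beta\in E_2^{4,0}$ through the edge map.

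The main obstacle is that, unlike in degree two, injectivity of the degree-four edge map is not automatic from $H^1(\Lambda;\Z)=0$: the potentially nonzero incoming differentials $d_3:E_3^{1,2}\to E_3^{4,0}$ and $d_4:E_4^{0,3}\to E_4^{4,0}$ originate in subquotients of $H^1(\Gamma/\Lambda;H^2(\Lambda;\Z))$ and $H^0(\Gamma/\Lambda;H^3(\Lambda;\Z))$, neither of which is forced to vanish. To handle this I would tensor with $\Q$ and argue rationally: a class of the form $q^*\alpha\smile q^*\beta$ with each factor lying in bidegree $(2,0)$ survives the higher differentials by the Leibniz rule, since differentials vanish on $E_r^{2,0}$ for $r\ge2$ and hence by multiplicativity also vanish on products of such classes. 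Consequently $\alpha\smile\beta$ is a permanent cocycle in $E_\infty^{4,0}$ and its image there agrees with its image in $E_2^{4,0}$ modulo possibly torsion contributions from the differentials landing in $E^{4,0}$; tracking the ranks shows the rational image is nonzero whenever $\alpha\smile\beta$ is rationally nonzero in $H^4(\Gamma/\Lambda;\Z)$.

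The closing step is standard: a non-torsion class in $H^4(\Gamma/\Lambda;\Z)$ is rationally nonzero, and the argument above shows its image in $H^4(\Gamma;\Q)$ is nonzero, so its integral image in $H^4(\Gamma;\Z)$ cannot be annihilated by any nonzero integer. The delicate point to verify carefully in writing this up is the Leibniz-rule claim that products of degree-$(2,0)$ edge classes cannot be hit by higher differentials; if that fails in the generality stated, one can still recover the conclusion whenever $\Lambda$ is cohomologically trivial over $\Q$ in positive degrees (the case relevant to the locally finite extensions discussed just before this lemma), in which the whole spectral sequence collapses rationally and $q^*$ becomes a rational isomorphism in every degree.
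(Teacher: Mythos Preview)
The statement as printed contains a typo: $\alpha,\beta$ are meant to lie in $H^1(\Gamma/\Lambda;\Z)$, not $H^2$. This is clear from the surrounding context (the lemma is invoked only to transport the hypothesis of Theorem~\ref{main}, which concerns a cup product of degree-one classes) and from the paper's own proof, which shows merely that $q^*$ is injective on $H^2$ and then concludes. With the intended degrees, $\alpha\smile\beta\in H^2(\Gamma/\Lambda;\Z)$, and the paper's argument is: the five-term exact sequence gives injectivity of $q^*$ on $H^2$ when $H^1(\Lambda;\Z)=0$, and $q^*$ respects cup product, so $q^*(\alpha)\smile q^*(\beta)=q^*(\alpha\smile\beta)$ is non-torsion. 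Your second paragraph proves exactly this injectivity via the LHS spectral sequence, and that part of your write-up matches the paper's proof.

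The rest of your proposal attacks the literal degree-four statement, and here there is a genuine gap. The Leibniz rule gives $d_r(\alpha\smile\beta)=d_r(\alpha)\smile\beta\pm\alpha\smile d_r(\beta)=0$, so the product is a permanent \emph{cocycle}. But every class on the bottom row $E_r^{4,0}$ is already a permanent cocycle, since outgoing differentials land in negative $q$-degree; multiplicativity buys you nothing here. What you actually need is that $\alpha\smile\beta$ is not in the image of the incoming differentials $d_3:E_3^{1,2}\to E_3^{4,0}$ and $d_4:E_4^{0,3}\to E_4^{4,0}$, and the Leibniz rule says nothing about that. The sources of these differentials are built from $H^2(\Lambda;\Z)$ and $H^3(\Lambda;\Z)$, which the hypothesis $H^1(\Lambda;\Z)=0$ does not constrain even rationally, so your assertion that they contribute ``only torsion'' is unsupported. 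You essentially concede this in your last paragraph; the fallback hypothesis you propose (rational acyclicity of $\Lambda$ in positive degrees) is far stronger than what is stated and is not what the paper uses.
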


\begin{proof}
From the exact sequence in~\cite{gysin} Theorem~III.2 we see that the map $q^*:H^2(\Gamma/\Lambda;\Z)\rightarrow H^2(\Gamma;\Z)$ is an injection. Because $q^*$ preserves cup product we must have that $q^*(\alpha)\smile q^*(\beta)=q^*(\alpha\smile\beta)$ is non-torsion.
\end{proof}

\begin{rmk}\label{extension}
Note that since $H^1(\Lambda;\Z)\cong\Hom(\Lambda;\Z)$ it follows that if all elements of $\Lambda$ are torsion then $H^1(\Lambda;\Z)=\{0\}$. Thus if we have an extension
$$\begin{tikzcd}
e\arrow{r}{}&\Lambda\arrow{r}{}& \Gamma\arrow{r}{}&\Upsilon\arrow{r}{}&e
\end{tikzcd}$$
where $\Lambda$ has only torsion elements, $\Gamma$ is finitely generated, and $\Upsilon$ meets the conditions of Theorem~\ref{main} then $\Gamma$ meets the condition of Theorem~\ref{main} as well. The same argument holds if $\Lambda$ has property (T), is simple, or is any other group without 1-cohomology.
\end{rmk}

\begin{defn}[\cite{houghton}]
The Houghton groups $H_n$ for $n\ge3$ are defined as follows. Let $X_n=\{1,\ldots,n\}\times\N$. For $k\in\{2,\ldots,n\}$ define 
$$g_k(x,y)=\begin{cases}(x,y+1)&x=1;\\
(1,0)&(x,y)=(k,0);\\
(x,y-1)&x=k\mbox{ and }y\ne0;\\
(x,y)&\mbox{otherwise.}
\end{cases}$$
Then $H_n$ is the subgroup of permutations of $X_n$ spanned by $g_2,\ldots,g_n$.
\end{defn}

\begin{cor}
For $n\ge3$, and $1<p\le\infty$ the Houghton group $H_n$ is not stable in the unnormalized Schatten $p$-norm.
\end{cor}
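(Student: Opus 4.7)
The plan is to apply Remark~\ref{extension}. I will exhibit $H_n$ as an extension
$$1\rightarrow\Lambda\rightarrow H_n\rightarrow\Z^{n-1}\rightarrow 1$$
with $\Lambda$ a torsion group, and then produce $\alpha,\beta\in H^1(\Z^{n-1};\Z)$ whose cup product is non-torsion.

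For the extension, I would argue as follows. Given $g\in H_n$, the restriction of $g$ to each ray $\{i\}\times\N$ is eventually a translation by some integer $t_i(g)$. This holds for each generator $g_k$ by inspection of its definition, and the property is preserved under composition and inversion, so the map $t\colon H_n\rightarrow\Z^n$, $g\mapsto(t_1(g),\ldots,t_n(g))$, is a well-defined homomorphism. Since $g$ permutes the locally finite set $X_n$, the translation amounts satisfy $\sum_i t_i(g)=0$, so $t$ lands in the subgroup $\{(a_i)\in\Z^n:\sum_ia_i=0\}\cong\Z^{n-1}$, and the relation $t(g_k)=e_1-e_k$ shows $t$ is surjective. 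The kernel $\Lambda$ consists of permutations that are eventually the identity on every ray, i.e.\ the finitely supported permutations of $X_n$; each such element is a product of finitely many disjoint finite cycles, hence has finite order, so $\Lambda$ is a torsion group and $H^1(\Lambda;\Z)=0$.

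Since $n\geq 3$, the quotient $\Z^{n-1}$ has rank at least $2$, and $H^*(\Z^{n-1};\Z)$ is an exterior algebra on $n-1$ degree-one generators, so choosing $\alpha,\beta\in H^1(\Z^{n-1};\Z)$ to be two distinct coordinate projections makes $\alpha\smile\beta$ a basis element of the free abelian group $H^2(\Z^{n-1};\Z)$, hence non-torsion. By Remark~\ref{extension}, the pullbacks satisfy $q^*(\alpha)\smile q^*(\beta)$ non-torsion in $H^2(H_n;\Z)$, where $q=t$ is the quotient map. Since $H_n$ is finitely generated by $g_2,\ldots,g_n$, Theorem~\ref{main} gives the desired conclusion.

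The main step requiring care is the identification of $\Lambda$ with the finitely supported permutations and the verification that $t$ is a well-defined homomorphism into $\Z^{n-1}$; once these standard facts about Houghton groups are in hand, the torsion property of $\Lambda$ and the non-triviality of the cup product on the quotient are immediate, and the theorem applies directly.
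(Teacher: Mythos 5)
Your proof is correct and takes essentially the same route as the paper: both exhibit $H_n$ as an extension of $\Z^{n-1}$ by the torsion group of finitely supported permutations and then invoke Remark~\ref{extension} together with Theorem~\ref{main}. The only difference is that you verify the extension directly via the eventual-translation homomorphism (and spell out the non-torsion cup product on $\Z^{n-1}$), whereas the paper cites these facts from the literature.
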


\begin{proof}
It follows from an argument originally due to \cite{houghtonext} (see~\cite{houghtonext2} for a full explanation) that there is an extension
$$\begin{tikzcd}
e\arrow{r}{}&S_\infty\arrow{r}{}& H_n\arrow{r}{}&\Z^{n-1}\arrow{r}{}&e
\end{tikzcd}$$
so by Remark~\ref{extension} and Theorem~\ref{main} it follows that $H_n$ is not stable in the Schatten $p$-norm.
\end{proof}

For $n\ge4$ one can find a way to show this with Lemma~\ref{split} instead of Theorem~\ref{main}. If $g_2,\ldots, g_n$ are the generators above. Note that for $i$, $j$, $k$ distinct the elements $g_i^{-1}g_j$ and $g_k$ commute with each other. So the map from $H_n\rightarrow\Z^2$ defined by $g_j\mapsto e_1$, $g_k\mapsto e_2$, and $g_\ell\mapsto0$ for $\ell\ne j,k$ has a splitting determined by $e_1\mapsto g_i^{-1}g_j$ and $e_2\mapsto g_k$. The author does not see an obvious splitting in the $n=3$ case.

\begin{cor}\label{thompson}
For $1<p\le\infty$ Thompson's group $F$ is not stable in the unnormalized Schatten $p$-norm.
\end{cor}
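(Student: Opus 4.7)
The plan is to realize $F$ as a split extension $[F,F] \rtimes \Z^2$ and then apply Lemma~\ref{split}, reducing the problem to the non-stability of $\Z^2$. Recall that the abelianization $F/[F,F]$ is $\Z^2$, with the quotient map $q = (\alpha,\beta) : F \to \Z^2$ given by the two homomorphisms that record the base-$2$ logarithms of the slopes at $0^+$ and at $1^-$. To split $q$ I would use the standard fact that two elements of $F$ with disjoint supports in $[0,1]$ commute: take $y$ to be a rescaled copy of the standard generator $x_0$ supported in $[0,1/2]$ and take the other standard generator $x_1$, which is supported in $[1/2,1]$. These two elements commute, and the pairs $(\alpha(y), \beta(y))$ and $(\alpha(x_1), \beta(x_1))$ form a basis of $\Z^2$, so they determine a section of $q$.

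Once the splitting is in hand, Lemma~\ref{split} shows that stability of $F$ would force stability of $\Z^2$. For $\Z^2$ the non-stability follows from Theorem~\ref{main} applied with $\alpha, \beta$ the two coordinate projections $\Z^2 \to \Z$: their cup product generates $H^2(\Z^2;\Z) \cong \Z$ and is thus non-torsion; in fact this is precisely the situation that motivates the Voiculescu construction used throughout the paper. The main obstacle is writing down an explicit splitting of $q$, but the disjoint-support trick disposes of it in a line. As an alternative route one can apply Theorem~\ref{main} directly to $F$, noting that because $q$ is split, the induced $q^*$ is injective on cohomology, so $q^*(\alpha \smile \beta)$ is a non-torsion element of $H^2(F;\Z)$ coming from the non-torsion generator of $H^2(\Z^2;\Z)$.
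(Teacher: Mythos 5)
Your argument is correct, but it is not the proof the paper gives for this corollary: the paper's proof invokes the computation of the cohomology ring of $F$ from the literature, reads off that $H^1(F;\Z)\cong\Z^2$ with the cup product of the two generators non-torsion, and applies Theorem~\ref{main} directly to $F$. Your route --- splitting the abelianization map $q=(\alpha,\beta):F\to\Z^2$ by a pair of commuting, disjointly supported elements and then invoking Lemma~\ref{split} to reduce to the non-stability of $\Z^2$ --- is essentially the ``alternate proof'' that the paper records immediately \emph{after} the corollary (attributed to Fournier-Facio), with the same disjoint-support trick: your $y$ and $x_1$ play the roles of the paper's $g$ and $f$. The details check out: $y$ and $x_1$ commute, their images $(\mp1,0)$ and $(0,1)$ under $(\alpha,\beta)$ form a basis of $\Z^2$, so the abelian subgroup they generate maps isomorphically onto $\Z^2$ and furnishes a section; and $\Z^2$ is not stable by Theorem~\ref{main} since the cup product of the coordinate projections generates $H^2(\Z^2;\Z)\cong\Z$. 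The trade-off between the two approaches is exactly the one the paper discusses in general: the cohomological proof needs only the (cited, nontrivial) ring structure of $H^*(F;\Z)$ and no knowledge of a splitting, while your elementary proof is self-contained but hinges on exhibiting an explicit copy of $\Z^2$ inside $F$ --- something the paper emphasizes is not always available (e.g., for surface groups or, apparently, for Houghton's group $H_3$). Your closing remark that the split surjection forces $q^*$ to be injective, hence $q^*(\alpha)\smile q^*(\beta)$ non-torsion in $H^2(F;\Z)$, is also correct and gives a third, hybrid justification.
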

\begin{proof}
The cohomology ring of $F$ can be computed with methods in~\cite{thompsomHomologyLong}; see~\cite{thompsonHomology} for an explicit computation in the case of $F$. From this computation, one can see that $H^1(F;\Z)\cong\Z^2$ and the cup product of the two generators is non-torsion.
\end{proof}

A similar question of whether or not $F$ is permutation stable was raised by Arzhantseva and Păunescu in~\cite{almostperm}, where they point out that if $F$ were permutation stable it would imply that it is not sofic, which would demonstrate both the existence of a non-sofic group and that $F$ is non-amenable, both of which are famous unsolved problems. Showing that $F$ is stable in the {\em normalized} Hilbert-Schmidt norm would also be very interesting because it would show that $F$ is not hyperlinear by the same argument they give. Our result is an interesting contrast to the result of Fournier-Facio and Rangarajan in~\cite{ulamlamp} that $F$ is uniformly stable with respect to all submultiplicative norms.

An alternate proof of Corollary~\ref{thompson} fact (using Lemma~\ref{split}) was pointed out to the author by Francesco Fournier-Facio. View $F$ as the group piecewise-linear homeomorphisms from the unit interval to itself where all slopes are powers of 2 and all non-differentiable points are dyadic rationals. The group operation is composition. Let $f,g\in F$ be any two functions so that $f|_{[0,\frac12]}=\id_{[0,\frac12]}$ and $g|_{[\frac12,1]}=\id_{[\frac12,1]}$, but $f'(1)=g'(0)=2$. One can define homomorphisms $\alpha,\beta:F\rightarrow\Z$ by $\alpha(h)=\log_2(h'(1))$ and $\beta(h)=\log_2(h'(0))$. Then sending the generators of $\Z^2$ to $f$ and $g$ provides a splitting for the map $(\alpha,\beta)$.

Another class of examples are groups of the form $\Gamma\vee\Lambda$. Here $\Gamma\vee\Lambda$ is defined as follows. 
\begin{defn}[\cite{vee}]\label{vee}
Pick set models of $\Gamma$ and $\Lambda$ so that the identity elements of $\Gamma$ and $\Lambda$ are identified with each other, but $(\Gamma\setminus\{e\})\cap(\Lambda\setminus\{e\})=\emptyset$. Then let $S$ be the set $\Gamma\cup\Lambda$. Then define an action of $\Gamma$ on $S$ so that $\Gamma$ acts on itself by left translation while it stabilizes elements of $\Lambda\setminus\{e\}$ and defines an action of~$\Lambda$ on $S$ analogously. Define $\Gamma\vee\Lambda$ to be the subgroup of $\Sym(S)$ generated by the actions of $\Gamma$ and $\Lambda$.
\end{defn}

One can show that if $H^1(\Gamma;\Z)$ and $H^1(\Lambda;\Z)$ are both nontrivial, then $\Gamma\times\Lambda$ is not stable in the unnormalized Schatten $p$-norm, for $1<p\le\infty$, by applying Lemma~\ref{split} as follows. By the cohomological assumption, there are maps $\alpha:\Gamma\rightarrow\Z$ and $\beta:\Lambda\rightarrow\Z$, which we may take to be surjections. Then the map $(\alpha,\beta):\Gamma\times\Lambda\rightarrow\Z^2$ clearly splits. From this it follows that $\alpha\smile\beta$ is non-torsion.

\begin{cor}
Let $1<p\le\infty$. If $\Gamma$ and $\Lambda$ are finitely generated groups so that the integer cohomology of each is nontrivial then $\Gamma\vee\Lambda$ is not stable in the unnormalized Schatten $p$-norm.
\end{cor}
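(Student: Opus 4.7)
The plan is to apply Theorem~\ref{main} to $\Gamma\vee\Lambda$ by producing classes $\tilde\alpha,\tilde\beta\in H^1(\Gamma\vee\Lambda;\Z)$ whose cup product is non-torsion. By hypothesis there are nontrivial homomorphisms $\alpha:\Gamma\to\Z$ and $\beta:\Lambda\to\Z$, and in particular $\Gamma$ and $\Lambda$ are infinite. I would first verify that the assignments $g\mapsto g$ for $g\in\Gamma$ and $h\mapsto e$ for $h\in\Lambda$ extend to a well-defined retraction $r_\Gamma:\Gamma\vee\Lambda\to\Gamma$, and symmetrically define $r_\Lambda$. The key point is that if a word $w\in\Gamma*\Lambda$ acts trivially on $S$, then selecting $x\in\Gamma$ whose trajectory under the initial segments of $w$ avoids $e$ (possible since only finitely many values of $x$ need be excluded and $\Gamma$ is infinite), one sees that all $\Lambda$-letters act trivially along this trajectory and so the product of $\Gamma$-letters in $w$ acts trivially on $x$ by left translation, which forces it to equal $e$ in $\Gamma$. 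Setting $\tilde\alpha=\alpha\circ r_\Gamma$ and $\tilde\beta=\beta\circ r_\Lambda$ then produces the required classes.

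The main obstacle is showing $\tilde\alpha\smile\tilde\beta$ is non-torsion: the naive splitting argument used for $\Gamma\times\Lambda$ breaks down here because $g\in\Gamma$ and $h\in\Lambda$ do not commute in $\Gamma\vee\Lambda$, so the combined map $(r_\Gamma,r_\Lambda):\Gamma\vee\Lambda\to\Gamma\times\Lambda$ admits no obvious section. The rescuing ingredient is a direct computation in $\Sym(S)$: for any $g\in\Gamma\setminus\{e\}$ and $h\in\Lambda\setminus\{e\}$, the commutator $[g,h]=ghg^{-1}h^{-1}$ is precisely the $3$-cycle $(e,g,h)$, fixing every other point of $S$. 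In particular $[g,h]^3=e$ in $\Gamma\vee\Lambda$.

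Finally, I would pick $g,h$ with $\alpha(g)=\beta(h)=1$ and use the relator $r=[g,h]^3$ to build a $2$-cycle in the normalized bar complex. The fan-triangulation $2$-chain $c_0=\sum_{k=1}^{11}[v_k\,|\,s_{k+1}]$, where $s_1,\dots,s_{12}$ are the letters of $r$ and $v_k=s_1\cdots s_k$, has boundary $3([g]+[h]+[g^{-1}]+[h^{-1}])$, so the corrected chain $c=c_0-3[g\,|\,g^{-1}]-3[h\,|\,h^{-1}]$ is a $2$-cycle. Evaluating the cup-product cocycle $(a,b)\mapsto\tilde\alpha(a)\tilde\beta(b)$ term by term, the only nonzero contributions come from $k\in\{1,5,9\}$ (where $\tilde\alpha(v_k)=1$ and $s_{k+1}=h$), yielding $\langle\tilde\alpha\smile\tilde\beta,c\rangle=3$. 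Since this pairing is a nonzero integer, $\tilde\alpha\smile\tilde\beta$ is non-torsion, and Theorem~\ref{main} completes the proof.
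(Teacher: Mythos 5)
Your proof is correct, but it takes a genuinely different route from the paper's. The paper quotes \cite[Proposition~1.7]{vee} to get the extension $e\to\Alt_f(S)\to\Gamma\vee\Lambda\to\Gamma\times\Lambda\to e$, observes that the locally finite kernel $\Alt_f(S)$ has trivial $H^1$, and then invokes Remark~\ref{extension} (injectivity of $q^*$ on $H^2$ via the Gysin-type sequence) together with the splitting of $(\alpha,\beta):\Gamma\times\Lambda\to\Z^2$ to conclude that the pulled-back cup product is non-torsion. You instead build the two retractions $r_\Gamma,r_\Lambda$ by hand with the trajectory argument (this is essentially constructing the quotient to $\Gamma\times\Lambda$ directly, and your argument for why a word acting trivially on $S$ has trivial $\Gamma$-part is sound, using that $\Gamma$ is infinite), and then exhibit an explicit $2$-cycle pairing to $3$ with the cup-product cocycle. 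Your key observation that $[g,h]$ acts as the $3$-cycle $(e,g,h)$ on $S$, so that $[g,h]^3=e$, checks out, and your fan-triangulation computation of the pairing is correct; I verified that the only nonvanishing terms are $k=1,5,9$, each contributing $\tilde\alpha(v_k)\tilde\beta(h)=1$, and the correction terms contribute $0$. What your approach buys is a self-contained, citation-free proof that moreover makes visible exactly why the naive splitting argument fails ($g$ and $h$ generate something with the relation $[g,h]^3=e$ rather than $\Z^2$) and turns that very relation into the homology class detecting the obstruction; what the paper's approach buys is generality, since Remark~\ref{extension} handles any extension with $H^1$-trivial kernel in one stroke. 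Two small points to tidy up: the paper's $C_2(\Gamma)$ is the unnormalized bar complex, so either work in the normalized complex as you say (noting it computes the same $H_2$) or add a multiple of $[e|e]$ to kill the leftover $5[e]$ in $\partial_2 c$ (this does not change the pairing since $\tilde\alpha(e)\tilde\beta(e)=0$); and you should record that $\Gamma\vee\Lambda$ is finitely generated (immediate from finite generation of $\Gamma$ and $\Lambda$), or simply invoke the remark following Theorem~\ref{main} since you have already produced a nontrivial pairing with a $2$-cycle.
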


\begin{proof}
Note that both groups must be infinite, in order to have integer cohomology. Then by \cite[Proposition~1.7]{vee}, for any infinite groups $\Gamma$ and $\Lambda$ there is a short exact sequence
$$\begin{tikzcd}
e\arrow{r}{}&\Alt_f(S)\arrow{r}{}&\Gamma\vee\Lambda\arrow{r}{\pi}&\Gamma\times\Lambda\arrow{r}{}&e
\end{tikzcd}$$
where $S$ is the set defined in Definition~\ref{vee} and $\Alt_f(S)$ is the group of finitely-supported even permutations of $S$. From Remark~\ref{extension} the result follows.
\end{proof}

This can also be explained with Lemma~\ref{split}. While by \cite[Theorem~1.8]{vee} the map from $\Gamma\times\Lambda\rightarrow\Gamma\vee\Lambda$ often does not split, the composition with the map to $\Z^2$ does. If $\alpha$ and $\beta$ are surjective elements in $\Hom(\Gamma,\Z)$ and $\Hom(\Lambda,\Z)$ respectively then let $s:\Z^2\rightarrow\Gamma\times\Lambda$ be a splitting. By \cite[Proposition~7.1]{vee} this also induces an, $s_\vee$ from $\Z\vee\Z$ into $\Gamma\vee\Lambda$. By \cite[Proposition~4.2]{vee} the map, $\pi_{\Z}:\Z\vee\Z\rightarrow\Z^2$ does split; call the splitting $t$. Then we claim that $s_\vee\circ t$ is a splitting of $(\alpha,\beta)\circ\pi$. To show this compute
\begin{align*}
(\alpha,\beta)\circ\pi\circ s_\vee\circ t&=(\alpha,\beta)\circ s\circ\pi_\Z\circ t\\
&=\id_{\Z^2}.
\end{align*}

\begin{defn}[\cite{wreathlike1}]
Suppose $\Upsilon$ and $\Lambda$ are groups, and $I$ is a set that~$\Upsilon$ acts on. Then a {\em wreath-like product} of $\Lambda$ by $\Upsilon$ with corresponding to the action is a group $\Gamma$ fitting into an extension
$$\begin{tikzcd}
e\arrow{r}{}&\bigoplus_{i\in I}\Lambda_i\arrow{r}{}&\Gamma\arrow{r}{\pi}&\Upsilon\arrow{r}{}&e
\end{tikzcd}$$
where $\Lambda_i\cong\Lambda$, and for all $g\in\Gamma$, $g\Lambda_ig^{-1}=\Lambda_{\pi(g)\cdot i}$.
\end{defn}

\begin{cor}
If $\Lambda$ is a finitely generated group with trivial 1-cohomology, and $\Upsilon$ satisfies the conditions of Theorem~\ref{main} then so does any wreath-like product of $\Lambda$ by~$\Upsilon$ with a transitive action. In particular, a wreath-like product of a finite group, property (T) group, or finitely generated simple group by~$\Z^n$ for $n\ge2$, with a transitive action, is not stable in the Schatten $p$-norm for $1<p\le\infty$.
\end{cor}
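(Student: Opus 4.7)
The plan is to apply Remark~\ref{extension} to the defining extension of a wreath-like product,
$$e\to N\to\Gamma\to\Upsilon\to e,\qquad N:=\bigoplus_{i\in I}\Lambda_i,$$
which reduces everything to verifying (a) that $H^1(N;\Z)=0$ and (b) that $\Gamma$ is finitely generated.

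For (a), I would use the identification $H^1(N;\Z)\cong\Hom(N,\Z)$ together with the fact that a homomorphism out of a direct sum is determined by its restrictions to the summands, giving $\Hom(N,\Z)\cong\prod_{i\in I}\Hom(\Lambda_i,\Z)=0$ by hypothesis. For (b), the hypotheses of Theorem~\ref{main} on $\Upsilon$ force $\Upsilon$ to be finitely generated; I would fix lifts $\tilde u_1,\ldots,\tilde u_m\in\Gamma$ of a generating set of $\Upsilon$ and a finite generating set $S_0$ of $\Lambda_{i_0}\cong\Lambda$ at a basepoint $i_0\in I$. Transitivity of the $\Upsilon$-action together with the relation $g\Lambda_ig^{-1}=\Lambda_{\pi(g)\cdot i}$ built into the definition of a wreath-like product implies that every $\Lambda_i$ is the conjugate of $\Lambda_{i_0}$ by a word in the $\tilde u_j$, so $\{\tilde u_1,\ldots,\tilde u_m\}\cup S_0$ generates $\Gamma$.

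For the ``in particular'' clause I would record three standard facts: a finite group, a property (T) group (which has finite abelianization), and a (nontrivial) finitely generated simple group all admit no nonzero homomorphism to $\Z$, hence have vanishing $H^1(\cdot;\Z)$. To see $\Upsilon=\Z^n$ with $n\ge 2$ satisfies the hypotheses of Theorem~\ref{main}, I would recall that $H^*(\Z^n;\Z)$ is the exterior algebra on $H^1(\Z^n;\Z)\cong\Z^n$, so the cup product of any two distinct coordinate projections is a basis element of $H^2(\Z^n;\Z)$ and in particular non-torsion.

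The argument has no serious obstacle; the only substantive observation is that $H^1$ of a possibly infinite direct sum equals the product of the $H^1$'s of the summands, which is immediate from $H^1\cong\Hom(\cdot,\Z)$. Everything else is bookkeeping on generating sets and invocation of Remark~\ref{extension} and Theorem~\ref{main}.
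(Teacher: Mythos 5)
Your proposal is correct and follows essentially the same route as the paper's proof: both reduce to Remark~\ref{extension} by showing $H^1\bigl(\bigoplus_{i\in I}\Lambda_i;\Z\bigr)=0$ via the universal property of homomorphisms out of a direct sum, and both establish finite generation of $\Gamma$ by lifting generators of $\Upsilon$, taking a finite generating set of one summand $\Lambda_{i_0}$, and using transitivity together with $g\Lambda_i g^{-1}=\Lambda_{\pi(g)\cdot i}$ to sweep out all the summands. Your explicit treatment of the ``in particular'' clause (vanishing $H^1$ for finite, property (T), and simple groups, and the exterior-algebra description of $H^*(\Z^n;\Z)$) is a harmless elaboration of what the paper leaves to its earlier remarks.
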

\begin{proof}
Note that any nonzero homomorphism from $\bigoplus_{i\in I}\Lambda_i$ must pull back to a nonzero homomorphism from $\Lambda_i$ to $\Z$ for some $i$. It follows that $H^1\left(\bigoplus_{i\in I}\Lambda_i;\Z\right)=\{0\}$. Now the result follows from the definition and Remark~\ref{extension}, if we can show that the wreath-like product is finitely generated. To this end let $\Lambda$, $\Gamma$, and $\Upsilon$ be as above. Let $g_1,\ldots, g_n\in \Gamma$ so that $\pi(g_1),\ldots,\pi(g_n)$ generate $\Upsilon$, pick a fixed $j\in I$, and let $h_1,\ldots, h_m$ generate~$\Lambda_j$. We claim that $g_1,\ldots,g_k,h_1,\ldots,h_m$ generates $\Gamma$. Let $G\le\Gamma$ be the subgroup generated by $g_1,\ldots, g_k$, and note that by definition $\pi(G)=\Upsilon$. Now note that for all $i\in I$ there is some $y\in\Upsilon$ so that $y\cdot j=i$. Then letting $g\in G$ so that $\pi(g)=y$ we have that $g\Lambda_jg^{-1}=\Lambda_{\pi(g)\cdot j}=\Lambda_i$. Thus $\bigoplus_{i\in I}\Lambda_i$ is in the subgroup generated by $g_1,\ldots,g_k\cup\Lambda_j$. Now let $h\in\Gamma$. Note that there is $h'\in G$ so that $\pi(h)=\pi(h')$. Then $h^{-1}h'\in\bigoplus_{i\in I}\Lambda_i$ completing the proof.
\end{proof}

\begin{center}
\textbf{Acknowledgments}
\end{center}
I would like to thank Francesco Fournier-Facio for many comments that shaped the Examples section. Most of the changes from the first version of this paper to the second came from a conversation I had with him. He explained a more elementary proof that Thompson's group $F$ is not stable and suggested the other examples that appear in the paper. I would like to thank Eli Bashwinger for answering a question I had about the cohomology of Thompson and Thompson-like groups; his answer inspired me to make this note. I would also like to thank the Purdue math department for supporting me with the Ross-Lynn grant during the 2023-2024 academic year.
\bibliographystyle{plain}
\bibliography{main}
\end{document}